\newtheorem{cro}{Corollary}[section]
\newtheorem{defn}{Definition}[section]
\newtheorem{prop}{Proposition}[section]
\newtheorem{thm}{Theorem}[section]
\newtheorem{lem}{Lemma}[section]
\newtheorem{Example}{Example}[section]
\newtheorem{rem}{\bf Remark}[section]
\numberwithin{equation}{section}
\begin{document}
\title{Non-dense orbits on topological dynamical systems
 \footnotetext {* Corresponding author}
  \footnotetext {2020 Mathematics Subject Classification:37D35, 37G05
}}
\author{Cao Zhao$^{1}$, Jiao Yang$^{2}$ and Xiaoyao Zhou$^{*2}$\\
	\small 1   School of Mathematical Sciences, Suzhou University of Science and Technology\\
	 \small   Nanjing 210059, Jiangsu, P.R.China\\
  \small   2 School of Mathematical Sciences and Institute of Mathematics, Nanjing Normal University,\\
   \small   Nanjing 210046, Jiangsu, P.R.China\\
 \small    e-mail: zhaocao@usts.edu.cn, jiaoyang6667@126.com, zhouxiaoyaodeyouxian@126.com
}
\date{}
\maketitle

\begin{center}
 \begin{minipage}{120mm}
{\small {\bf Abstract.}
Let  $(X,d,T )$ be  a topological dynamical system with the specification property. We consider the non-dense orbit set $E(z_0)$ and show that for any non-transitive point $z_0\in X$, this set $E(z_0)$ is empty  or carries full     topological pressure.  }
\end{minipage}
 \end{center}

\vskip0.5cm {\small{\bf Keywords and phrases:}     Non-dense orbits, topological pressure, specification property.}\vskip0.5cm
\section{Introduction}
A number $\theta $ is called badly approximable if $ |\theta-p/q |> c/q^2$ for some $c>0$ and all rational numbers $p/q$. It is well known that the set of badly approximable numbers has Lebesgue measure zero. In 1930s, Jarn\'{i}k \cite{Jar} showed that the set of badly approximable numbers is of full Hausdorff dimension. In 1960s, Schmidt introduced a game, which is known as Schmidt's game, and proved that the set of badly approximable numbers is 1/2-winning in the sense of the game \cite{Sch}.
The winning property implies density and full Hausdorff dimension.
 In \cite{Abe}, Abercrombie and Nair showed the non-dense set for  the expanding rational map of the Riemann sphere acting on
its Julia set $\mathbf J$ has full Hausdorff dimension. And this result can be seen as an opposite result of Hill and Velani (\cite{Hil}, \cite{Hil1}, \cite{Hil2}).
In fact, \cite{Kris} and \cite{Sara} generalized the result of \cite{Abe} to some more general systems.
Similarly, this result holds for many systems,
such as toral endomorphisms (\cite{Brod, Dani}), transitive $C^2$-Anosov diffeomorphisms (\cite{Dol, Urb}) and partially hyperbolic diffeomorphisms \cite{Wu, Wu1 }.
Recently,  Tseng proved that for a $C^2$-expanding endomorphism on the circle the non-dense set is a winning set for Schmidt's game and asked a question whether there are non-algebraic dynamical systems with winning non-dense set in dimensions greater than one (\cite{Tseng}). Later, he answered this question and proved that a certain Anosov diffeomorphism on the $2$-torus has a winning non-dense set, by using the $C^1$ conjugacy between such system and a linear hyperbolic automorphism on the $2$-torus (\cite{Tseng1}). In \cite{Wu2},  Wu studied the non-algebraic dynamical systems in dimensions greater than one have non-dense sets winning for Schmidt's game.

In view of our main results, it is worth mentioning the following point of view of Jarn\'{i}k's theorem and its generalizations. Let $(X, d, T)$ be a topological dynamical system, where $(X,d)$ is a compact metric space and $T:X\to X$ is a continuous map.
For any $x\in X$, let  $O_T(x)$ denote the orbit of $x$, i.e.,
$O_T(x):=\{x, Tx ,\cdots, T^nx\cdots \}$.
In fact, for any  $z_0 \in X$, we define the non-dense orbit set as follows
$$E(z_0 )= \{x\in X:~ z_0\notin \overline{\{T^n (x):~n\geq 0\}}\},$$
where  $x\in E(z_0)$ means $z_0$ is badly approximated by the orbit of $x$. When $T$ is Gauss map, $E(0)$ is the set of badly approximable numbers.
From the definition, any point in $E(z_0)$ has a non-dense forward orbit in $X$.

While the non-dense orbit set has been extensively studied for so many special systems, the general topological dynamical systems is rarely considered. Topological entropy, as a Hausdorff dimension,  plays an important role in topological dynamical systems. Topological entropy is an invariant of a dynamical system which describes the complexity of the orbits of the systems. The definition of topological entropy for compact invariant set was first given by Adler, Konheim and McAndrew \cite{Mca}. Bowen \cite{Bow} extend it to non-compact invariant sets and noticed the dimensional nature of this notion. Pesin and Pitskel \cite{PesPit} generalized the Bowen topological entropy on non-compact subsets to topological pressure.
In this paper, we try to study the topological pressure of non-dense orbits on the general topological dynamical systems.

 In this paper, we will study the subset $E(z_0  )\subset X$ in topological dynamical systems with specifications property (See Section \ref{sep} for precise definitions.)

Now we state our main results as follows.
\begin{thm}\label{main}
 Suppose $(X, d, T)$ be a dynamical system with the specification property. For any non-transitive point $z_0\in X$, i.e. $\overline{O_T(z_0)}\neq X$ and $\varphi \in C(X)$, then either $E(z_0)=\emptyset$ or 
$$P(E(z_0  ),\varphi)=P_{top}(\varphi),$$
where $P(Z,\varphi)$ denotes topological pressure of non-compact subset $Z$ and $P_{top}(\varphi)$ denotes the classical topological pressure of $X$.
\end{thm}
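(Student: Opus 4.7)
The bound $P(E(z_0), \varphi) \leq P_{top}(\varphi)$ is automatic from the monotonicity of the Pesin--Pitskel pressure, so the task reduces to the reverse inequality whenever $E(z_0) \neq \emptyset$. The plan is to fix $\epsilon > 0$ and construct a Moran-type Cantor subset $F \subset E(z_0)$ carrying a Frostman-type measure $\nu$ for which
\[
\nu(B_n(y, \delta)) \leq C\, e^{-n(P_{top}(\varphi) - \epsilon) + S_n \varphi(y)}
\]
holds for $y \in F$ and $n$ along a natural subsequence; the mass-distribution principle for Pesin--Pitskel pressure then yields $P(F, \varphi) \geq P_{top}(\varphi) - \epsilon$, and sending $\epsilon \to 0$ completes the proof.

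The setup. Fix an anchor point $x^* \in E(z_0)$ and $\eta > 0$ with $d(T^n x^*, z_0) > 4\eta$ for every $n \geq 0$; this $x^*$ furnishes orbit segments of arbitrary length that stay safely away from $z_0$. Choose $\delta \in (0, \eta)$ and let $M = M(\delta)$ be the gap constant given by the specification property. By the variational principle, select an ergodic invariant measure $\mu$ with $h_\mu(T) + \int \varphi \, d\mu \geq P_{top}(\varphi) - \epsilon/3$, and by a Katok-type argument extract, for each sufficiently large $n$, an $(n, 5\delta)$-separated set $\Gamma_n$ of points whose first $n$ orbit steps remain at distance $\geq 3\eta$ from $z_0$ and satisfy
\[
\sum_{x \in \Gamma_n} e^{S_n \varphi(x)} \geq e^{n(P_{top}(\varphi) - \epsilon/2)}.
\]
Pick rapidly growing sequences $n_k \nearrow \infty$ and $N_k$ with $N_k/n_k \to \infty$. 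For each word $\underline{\gamma} = (\gamma_k) \in \prod_k \Gamma_{n_k}$, the specification property produces $y_{\underline{\gamma}} \in X$ whose forward orbit alternates, with gaps of length $M$ themselves $\delta$-shadowing $x^*$, between $\delta$-shadowing $\gamma_k, \ldots, T^{n_k-1}\gamma_k$ and $\delta$-shadowing $x^*, \ldots, T^{N_k-1} x^*$. Let $F = \{y_{\underline{\gamma}}\}$. On the $x^*$-blocks and on the specification gaps the orbit is within $\delta$ of $x^*$'s orbit, hence $> 4\eta - \delta > 3\eta$ from $z_0$; on the $\gamma_k$-blocks the restriction built into $\Gamma_{n_k}$ supplies the same estimate, so $F \subset \{x : d(T^n x, z_0) \geq 3\eta \text{ for all } n \geq 0\} \subset E(z_0)$.

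From $F$ to the pressure estimate, equip $F$ with the measure $\nu$ that distributes mass on $\prod_k \Gamma_{n_k}$ with weight proportional to $\prod_k e^{S_{n_k} \varphi(\gamma_k)}$. The $(n_k, 5\delta)$-separation together with the bounded gap $M$ forces the relevant Bowen balls at level $k$ to be pairwise disjoint, and standard bookkeeping then delivers the Frostman inequality displayed above. The main obstacle in the whole scheme is the simultaneous requirement imposed on $\Gamma_n$: producing near-maximal pressure \emph{and} keeping $n$-orbits uniformly $\geq 3\eta$ away from $z_0$. This is exactly where both hypotheses combine crucially: the non-transitivity $\overline{O_T(z_0)} \neq X$ allows the near-maximizing measure $\mu$ to be chosen concentrated away from a neighborhood of $z_0$ (so that the avoidance restriction on $\Gamma_n$ costs only $O(\epsilon)$ in pressure), while the hypothesis $E(z_0) \neq \emptyset$ supplies the anchor $x^*$ needed to thread the specification pieces together without ever drifting back toward $z_0$.
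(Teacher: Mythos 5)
Your scheme founders at its central step: the construction of $(n,5\delta)$-separated sets $\Gamma_n$ that simultaneously carry near-maximal pressure \emph{and} consist of points whose first $n$ iterates all stay $3\eta$-away from $z_0$. Your justification --- that non-transitivity of $z_0$ ``allows the near-maximizing measure $\mu$ to be chosen concentrated away from a neighborhood of $z_0$'' --- does not follow: $\overline{O_T(z_0)}\neq X$ is a statement about the orbit of $z_0$, not about invariant measures, and under specification the near-maximizing (equilibrium-like) measures are typically fully supported, hence charge every ball around $z_0$; moreover even $\mu(B(z_0,3\eta))$ small would not give orbit segments avoiding the ball at \emph{every} time $0,\dots,n-1$, which is what a Katok-type extraction inside $E$-avoiding orbits needs. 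Establishing that near-full pressure survives the constraint of avoiding a fixed ball is essentially the theorem itself (for the ``hole'' $B(z_0,3\eta)$), so the argument is circular exactly where the work lies. The paper sidesteps this entirely: it takes \emph{unrestricted} maximal $(n_k,9\epsilon)$-separated sets $\mathcal{S}_k$ and, every $M$ steps, uses specification to splice in a one-step visit to a point $y$ with $d(y,\overline{O_T(z_0)})\geq 2\epsilon_0$; if the orbit of a constructed point ever entered the Bowen ball $B_{M+2m}(z_0,\epsilon)$ (an open neighborhood of $z_0$), the forced $y$-visit within that window would place $y$ within $2\epsilon<2\epsilon_0$ of the orbit of $z_0$, a contradiction, so $F\subset E(z_0)$ with no restriction ever imposed on the separated sets and with the ``wasted'' time (gaps plus $y$-insertions) of density at most $\eta$ by the choice of $M$ in (3.5).

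Two further steps would fail even if $\Gamma_n$ were granted. First, your time allocation is inverted: with $x^*$-blocks of length $N_k$ and $N_k/n_k\to\infty$, almost all time is spent shadowing $x^*$, which contributes no separation, so the claimed Frostman bound $\nu(B_n(y,\delta))\leq C e^{-n(P_{top}(\varphi)-\epsilon)+S_n\varphi(y)}$ cannot hold in general: over a window of length $t\approx\sum_i(n_i+N_i)$ the weights only supply an exponent of order $\sum_i n_i\,(P_{top}(\varphi)-\epsilon/2)$, which is negligible compared with $t\,(P_{top}(\varphi)-\epsilon)$ unless $\varphi$-averages along $x^*$ happen to compensate; the good blocks must occupy time-density tending to $1$, as in the paper. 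Second, specification gives no control whatsoever of the orbit during the transition gaps, so you cannot require the gaps to ``$\delta$-shadow $x^*$''; with infinitely many uncontrolled gap times the orbit could accumulate on $z_0$, so your inclusion $F\subset E(z_0)$ has a hole --- this is precisely what the paper's Bowen-ball neighborhood trick is designed to handle. Finally, the generalized pressure distribution principle requires the Frostman-type estimate for all sufficiently large $n$ (every Bowen ball meeting $F$), not merely along a natural subsequence, so that restriction would also have to be removed.
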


\section{Preliminaries}\label{sep}
Let $(X, d)$ be a compact metric space and $T: X\to X$ be a continuous map. Let $C(X)$ denote the Banach algebra of real valued continuous functions of $X$ equipped with the supremum norm.
For $n\in \mathbb{N}$, the Bowen metric $d_n$ on $X$  is defined by
\begin{align*}
d_n(x,y):=\max_{0\leq i\leq n-1}d(T^i(x),T^i(y)).
\end{align*}
Let $n\in \mathbb{N}$ and $\epsilon>0$. A subset $E$ of $X$ is said to be
$(n,\epsilon)$ separated with respect to $T$ if
$x,y\in E$, $x\neq y$, implies $d_n(x,y)>\epsilon$.
Let $s(n,\epsilon)$ denote the largest cardinality of $(n,\epsilon)$ separated
subsets of $X$ with respect to $T$.
Define $$P(\varphi, n,\epsilon):= \sup\left\lbrace \sum_{x\in E}e^{S_n\varphi(x)}: E \;\text{is a}\;(n,\epsilon )\; \text{separated set of}\;X \right\rbrace .$$
The classical topological pressure (see \cite{Wal}) of $T$ for $\varphi $ is
\begin{align*}
P_{top}(\varphi ):=\lim_{\epsilon\to0}\liminf_{n\to\infty}\frac{1}{n}\log P(\varphi, n,\epsilon)
=\lim_{\epsilon\to0}\limsup_{n\to\infty}\frac{1}{n}\log P(\varphi, n,\epsilon).
\end{align*}
 Let $M(X, T)$ denote the collection of all $T$-invariant Borel probability measure  and $h_{\mu}(T)$ denote  the measure entropy for the invariant measure $\mu$.
The well known variational principle states that
$$P_{top}(\varphi )=\sup\{h_{\mu}(T)+\int \varphi d\mu : \mu \in M(X, T)\}.$$
When $\varphi =0$, $h_{top}(T):= P_{top}(\varphi )$ denotes the classical topological entropy. Clearly, we have the variational principle as above for the classical topological entropy.

The non-compact topological pressure was introduced by Pesin and and Pitskel \cite{PesPit}. For convenience, we consider an equivalent definition (see \cite{Pes}). Next, we give the definition of non-compact topological pressure.
\begin{defn}{\rm\cite{Pes}}
Suppose $Z\subset X$ is an arbitrary Borel set and $\varphi\in C(X)$. Let $\Gamma_{n}(Z,\epsilon)$ be the collection of all finite or countable covers of $Z$ by sets of the form $B_{m}(x,\epsilon),$ with $m\geq n$. Let $S_{n}\varphi(x):=\sum_{i=0}^{n-1}\varphi(T^{i}x)$. Set
\begin{align*}	M(Z,t,\varphi,n,\epsilon):=\inf_{\mathcal{C}\in\Gamma_{n}(Z,\epsilon)}\left\{\sum_{B_{m}(x,\epsilon)\in\mathcal{C}}\exp (-tm+\sup_{y\in B_{m}(x,\epsilon)}S_{m}\varphi(y))\right\},
\end{align*}
	and
	\begin{align*}
	M(Z,t,\varphi,\epsilon)=\lim_{n\to\infty}M(Z,t,\varphi,n,\epsilon).
	\end{align*}
Then there exists a unique number $P(Z,\varphi,\epsilon)$ such that
$$P(Z,\varphi,\epsilon)=\inf\{t:M(Z,t,\varphi,\epsilon)=0\}=\sup\{t:M(Z,t,\varphi,\epsilon)=\infty\}.$$
	$P(Z,\varphi)=\lim\limits_{\epsilon\to0}P(Z,\varphi,\epsilon)$ is called
	the topological pressure of $Z$ with respect to $\varphi$.
\end{defn}
It is  obvious that the following hold:
\begin{enumerate}
	\item[(1)] $P(Z_1,\varphi)\leq P(Z_2,\varphi)$ for any $Z_1\subset Z_2\subset X$;
	\item[(2)] $P(Z,\varphi)=\sup_i P(Z_i,\varphi)$, where $Z=\bigcup_{i=1}^{\infty} Z_i\subset X$.
\end{enumerate}
For the full space $X$, we have  $P_{top}(\varphi )= P(X, \varphi)$. If $\varphi=0$, then $P(Z, 0)= h^B(T, Z)$ denotes the Bowen topological entropy of the subset  $Z$.
Next, we give the definition of topological dynamical systems with the specification property.

 \begin{defn}
A dynamical system $(X,d,T)$ satisfies the specification property
if for each $\epsilon>0$ there is an integer
$m=m(\epsilon)$ such that for any collection $\{I_{j}=[a_{j},b_{j}]\subset\mathbb N:j=1,\cdots,k\}$ of finite intervals with $a_{j+1}-b_{j}\geq m(\epsilon)$ for $j=1,\cdots,k-1$ and any $x_{1},\cdots,x_{k}$ in $X$, there exists a point $x$ such that
$$d(T^{p+a_{j}}x,T^{p}x_{j})<\epsilon~\text{for all}~p=0,\cdots,b_{j}-a_{j}~\text{and every}~j=1,\cdots,k.$$
\end{defn}

\section{Proof of Theorem \ref{main}}\label{proofm}
In this section, we will prove Theorem \ref{main}. It is obvious that $P(E(z_0  ),\varphi)\leq P_{top}(\varphi)$.
 We only need to prove the lower bound. Inspired by Thompson\cite{Tho1}, we first give the proposition about the Generalised Pressure Distribution Principle.
\begin{prop}
	{\rm \cite[Proposition 3.2]{Tho1}}
	Let $(X,d,T)$ be a
	topological dynamical system. Let $Z\subset X$ be an arbitrary Borel
	set. Suppose there exist $\epsilon>0$ and $s\geq0$ such that one can
	find a sequence of Borel probability measures $\mu_k,$ a constant
	$K>0$ and an integer $N$ satisfying
	\begin{align*}
	\limsup\limits_{k\to\infty}\mu_k(B_n(x,\epsilon))\leq
	K\exp(-ns+\sum\limits_{i=0}^{n-1}\varphi(T^ix))
	\end{align*}
	for every ball $B_n(x,\epsilon)$ such that $B_n(x,\epsilon)\cap
	Z\neq\emptyset$ and $n\geq N.$ Furthermore, assume that at least one
	limit measure $\nu$ of the sequence $\mu_k$ satisfies $\nu(Z)>0.$
	Then $P(Z, \varphi,\epsilon)\geq s.$
\end{prop}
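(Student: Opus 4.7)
The plan is to exploit the Carath\'eodory--Pesin characterisation $P(Z,\varphi,\epsilon) = \inf\{t : M(Z,t,\varphi,\epsilon) = 0\}$, so it will suffice to prove that $M(Z,s,\varphi,\epsilon) > 0$. First I would extract a subsequence $(\mu_{k_j})$ converging weak-$*$ to a Borel probability measure $\nu$ with $\nu(Z) > 0$; such a subsequence exists by the hypothesis together with weak-$*$ compactness of the space of Borel probability measures on the compact space $X$.

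The key input is that every Bowen ball
$B_m(x,\epsilon) = \{y \in X : d(T^iy, T^ix) < \epsilon,\ 0 \leq i \leq m-1\}$
is open in $X$, because $d$ and each iterate $T^i$ are continuous. Applying the Portmanteau theorem to the weak-$*$ convergent subsequence gives, for every open set $U$,
$$
\nu(U) \leq \liminf_{j \to \infty} \mu_{k_j}(U) \leq \limsup_{k \to \infty} \mu_k(U).
$$
Combining this with the standing hypothesis, for every ball $B_m(x,\epsilon)$ that meets $Z$ and satisfies $m \geq N$,
$$
\nu(B_m(x,\epsilon)) \leq K \exp\!\left(-ms + \sup_{y \in B_m(x,\epsilon)} S_m\varphi(y)\right).
$$

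Next, I would apply countable subadditivity of $\nu$ to an arbitrary cover $\mathcal{C} \in \Gamma_n(Z,\epsilon)$ with $n \geq N$:
$$
0 < \nu(Z) \leq \sum_{B_m(x,\epsilon) \in \mathcal{C}} \nu(B_m(x,\epsilon)) \leq K \sum_{B_m(x,\epsilon) \in \mathcal{C}} \exp\!\left(-ms + \sup_{y \in B_m(x,\epsilon)} S_m\varphi(y)\right).
$$
Taking the infimum over all such covers yields $\nu(Z)/K \leq M(Z,s,\varphi,n,\epsilon)$, and letting $n \to \infty$ gives $\nu(Z)/K \leq M(Z,s,\varphi,\epsilon)$. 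Since $\nu(Z) > 0$ and $K$ is finite, this forces $M(Z,s,\varphi,\epsilon) > 0$, so $s$ is not in the set $\{t : M(Z,t,\varphi,\epsilon) = 0\}$, and therefore $P(Z,\varphi,\epsilon) \geq s$, as desired.

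The only real obstacle is bookkeeping rather than any deep argument: one must recognise that Bowen balls are open (so Portmanteau applies in the correct direction without passing to closures), and thread the chain $\nu(U) \leq \liminf_j \mu_{k_j}(U) \leq \limsup_j \mu_{k_j}(U) \leq \limsup_k \mu_k(U)$ carefully so that the $\limsup$ bound stated for the original sequence $(\mu_k)$ transfers to a bound on $\nu$ along the subsequence used to produce it.
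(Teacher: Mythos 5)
Your argument is correct and is essentially the standard proof of this proposition (the paper itself does not reprove it but cites Thompson's Proposition 3.2, whose proof is exactly this: pass to a weak-$*$ convergent subsequence, use that Bowen balls are open so Portmanteau bounds $\nu$ of each ball by the $\limsup$ hypothesis, apply countable subadditivity to an arbitrary cover in $\Gamma_n(Z,\epsilon)$, and conclude $M(Z,s,\varphi,\epsilon)\geq \nu(Z)/K>0$). The only cosmetic point is that in the subadditivity step you should first discard the balls of the cover that do not meet $Z$ (the hypothesis only bounds $\nu$ on balls intersecting $Z$); since the remaining balls still cover $Z$ and the discarded terms are nonnegative, the final estimate $\nu(Z)\leq K\sum_{B\in\mathcal{C}}\exp(-ms+\sup_{y\in B}S_m\varphi(y))$ is unaffected.
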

Next, we begin the proof of Theorem \ref{main}.
Let $\epsilon >0$. We set $$\text{Var}(\varphi,\epsilon):= \sup\{|\varphi(x)-\varphi (y)|: d(x, y)\leq \epsilon \}.$$
For any real number $\mathbf C$ satisfy $\mathbf C< P_{top}(\varphi )$, we only need to show that
\begin{align}
\label{zhuyao}
  P(E(z_0),\varphi )\geq \mathbf C.
\end{align}
Firstly, since $z_0$ is non-transitive point,  we can choose choose $y\in X$ and $\epsilon_0 >0$ such that
\begin{align}\label{mao}
d\Big(y,    \overline{O_T(z_0)} \Big)\geq 2\epsilon_0 .
\end{align}
Fix sufficiently small $\eta >0$,
we can choose $\epsilon<\epsilon_0$
\begin{align}\label{suan}
\text{Var}(\varphi,2\epsilon)<\eta.
\end{align}
By the definition of topological pressure
$P_{top}(\varphi)= \lim\limits_{\epsilon\to0}\liminf\limits_{n\to\infty}\dfrac{1}{n}\log P(\varphi, n,\epsilon)$,  we can take a sequence  $\{n_k\}_{k\geq 1} \subset \mathbb N$ such that
 there exists  $(n_k, 9\epsilon)$-separated set $\mathcal{S}_k$ with  maximal  cardinality satisfying
\begin{align}\label{diyi}
\mathscr{M}_k:=\sum\limits_{x\in
\mathcal{S}_k}\exp\left\{\sum\limits_{j=0}^{n_k-1}\varphi(T^jx)\right\}\geq\exp n_k(\mathbf C-\eta),
\end{align}

\subsection{Construction of the Fractal $\mathbf F$}
Choose $M>0$   such that
\begin{align}\label{wucha}
\dfrac{2m(\epsilon )(\mathbf C+\|\varphi \|)}{M+2m(\epsilon )}<\eta.
\end{align}
Without losing generality, we assume that $M<n_1$.
Let $c_k=\lceil \frac{n_k}{M} \rceil $ then we break the $n_k$ orbit of $x\in \mathcal{S}_k$ as follows
$$\{  x, Tx, \cdots, T^{M-1}x\}\cup \{  T^Mx, T^{M+1}x, \cdots, T^{2M-1}x\}\cup \cdots \cup \{  T^{(c_k-1)M}x, T^{(c_k-1)M+1}x, \cdots, T^{n_k-1}x\}. $$
We replace $\{x, Tx, T^2x, \cdots, T^{n-1}\}$ by
$(x, n)$. We insert $(y,1)$ into $(x, n_k)$ as follows:
$$(x, M), (y, 1),  (T^Mx, M),  (y, 1), (T^{2M}x, M), (y, 1)\cdots .$$
 By the specification property, we can set the  following non-empty  set,
\begin{align*}
B(x,  n_k, \epsilon; y)=&
\bigcap_{j=1}^{c_k-1 }
T^{-(j-1)(M+2m(\epsilon )+1)}B_{M}(T^{(j-1)M}x ,\epsilon )\cap T^{ -
	(j-1)(M+2m(\epsilon )+1)-M-m(\epsilon )}B(y, \epsilon )\\
&\cap T^{ -
	(c_k-1)(M+2m(\epsilon )+1)-M-m(\epsilon )}B_{n_k-(c_k-1)M}(T^{(c_k-1)M}x, \epsilon ) \neq \emptyset.
\end{align*}
From the above setting, let $\widehat{n}_k:= n_k+(c_k-1)(2m(\epsilon )+1)$.
Choose a sequence $N_{k}$ and $N_k$ increasing to $\infty$ with $N_{0}=0$. Now we set $t_0=-m(\epsilon), t_1:= \widehat{n}_1N_1+  (N_1-1)m(\epsilon ) $ and  we can inductively define $t_{k+1}:=t_k+N_{k+1}(\widehat{n}_{k+1}+m(\epsilon )).$
We enumerate the points in the sets $\mathcal{S}_k$ provided by (\ref{diyi})  and write them as follows
$$\mathcal{S}_{k}=\{x_i : i=1, 2, \cdots, \# \mathcal{S}_k\}.$$
Let $\underline{x}_k =(x^k_{1} ,\cdots,x^k_{N_k } )\in \mathcal{S}_{k}^{N_k}$ where  $ \mathcal{S}_{k}^{N_k}= \mathcal{S}_{k}\times\cdots\times \mathcal{S}_{k}$. By the specification property, we have
 \begin{align*}
 B( \underline{x}_1,\cdots,\underline{x}_k; y)=&
\bigcap_{i=1}^{k-1} \bigcap_{j=1}^{N_i }
T^{ -t_{i-1}-m(\epsilon )-
 	(j-1)(\widehat{n}_j+m(\epsilon ))}B(x_j,  n_j, \epsilon; y)
 \neq \emptyset.
 \end{align*}
 We define $F_{k}$ by
 \begin{align*}
 F_{k}=\bigcup\{\overline{B( \underline{x}_1,\cdots,\underline{x}_k; y)}:( \underline{x}_1,\cdots,\underline{x}_k)\in \prod_{i=1}^k\mathcal{S}_{i}^{N_i} \}.
 \end{align*}
 Obviously, $F_{k}$ is compact and $F_{k+1}\subset F_{k}$. Define $$F=\bigcap_{k=1}^{\infty}F_{k}.$$
 The above construction implies that for each $p\in F$ shadows the points in $\mathcal{S}_i$ for some $i$ with the bug segments $m(\epsilon )$ by the specification property.
 For any $n>0$ we denotes $n_{rel}$ by the segment of times which shadow the separated points in $\mathcal{S}_i$ for some $i\geq 1$. The following lemma shows that $F\subset E(z_0)$.

 \begin{lem}\label{guidao}
 	For any $x\in F$, $ x\in E(z_0 )$.
 \end{lem}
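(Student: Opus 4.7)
The plan is to exhibit a single $\delta > 0$ such that $d(T^n(x), z_0) \geq \delta$ for every $n \geq 0$, which is exactly $x \in E(z_0)$. The only feature of the construction that I will use is the family of $(y,1)$ insertions: for $x \in F$, there is an infinite sequence of times $\tau_1 < \tau_2 < \cdots$ at which $d(T^{\tau_i}(x), y) \leq \epsilon$, and consecutive $\tau_i$'s are separated by at most a constant $J$ depending only on $M$ and $m(\epsilon)$ (by inspection $J = 2M + 3m(\epsilon) + 1$ suffices, bounding both the within-block gap $M + 2m(\epsilon) + 1$ and the slightly larger gap that can appear across the transition from the blocks of $\mathcal{S}_k$ to those of $\mathcal{S}_{k+1}$). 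In particular, for every $n \geq 0$ there exists $j_* \in \{0, 1, \ldots, J\}$ with $n + j_* = \tau_i$ for some $i$.

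With this in hand the argument is short. By uniform continuity of the iterates $T^0, T^1, \ldots, T^J$ on the compact metric space $X$, pick $\delta \in (0, \epsilon_0)$ so that $d(a, b) < \delta$ implies $d(T^j(a), T^j(b)) < \epsilon_0$ for every $j \in \{0, 1, \ldots, J\}$. Suppose for contradiction that $d(T^n(x), z_0) < \delta$ for some $n \geq 0$, and pick $j_*$ as above with $\tau_i = n + j_*$ being a $(y,1)$ time. Then the choice of $\delta$ gives $d(T^{\tau_i}(x), T^{j_*}(z_0)) < \epsilon_0$, while the $(y,1)$-insertion gives $d(T^{\tau_i}(x), y) \leq \epsilon < \epsilon_0$. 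The triangle inequality yields $d(T^{j_*}(z_0), y) < 2\epsilon_0$, contradicting (\ref{mao}) since $T^{j_*}(z_0) \in \overline{O_T(z_0)}$.

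The main obstacle I expect is the bookkeeping of the first paragraph: the set $F$ is defined as a nested intersection of unions of closed blocks, so one must justify that a single $x \in F$ sits in a coherent infinite nested chain of blocks $\overline{B(\underline{x}_1, \ldots, \underline{x}_k; y)}$, yielding one well-defined infinite sequence of $(y,1)$ times rather than a different finite sequence at each level $k$. This is a König's-lemma style argument on the finitely branching tree of admissible tuples, using the nestedness $B(\underline{x}_1, \ldots, \underline{x}_k; y) \subset B(\underline{x}_1, \ldots, \underline{x}_{k-1}; y)$, together with the fact that the consecutive $\tau_i$'s are uniformly bounded gaps. Once this structural step is dispatched, the uniform-continuity-plus-triangle-inequality argument above closes the proof; notably it uses neither the pressure bookkeeping $\mathscr{M}_k$ nor the $9\epsilon$-separation of the sets $\mathcal{S}_k$.
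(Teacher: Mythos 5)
Your proof is correct, and it reaches the conclusion by a genuinely different mechanism than the paper's. The paper never produces a static $\delta$: it works with the open Bowen ball $B_{M+2m(\epsilon)}(z_0,\epsilon)$ and shows the orbit of $x$ never enters it --- if $T^jx$ did, then $T^{j+i}x$ would $\epsilon$-shadow $T^iz_0$ for $0\le i<M+2m(\epsilon)$, and since some time in that window is a $(y,1)$-time, the triangle inequality would put $y$ within $2\epsilon<2\epsilon_0$ of a point of $O_T(z_0)$, contradicting (\ref{mao}). You instead convert $\delta$-closeness of $T^nx$ to $z_0$ into $\epsilon_0$-closeness along the next $J$ iterates via uniform continuity of $T^0,\dots,T^J$, obtaining the stronger-looking uniform bound $d(T^nx,z_0)\ge\delta$; the paper's Bowen-ball device avoids uniform continuity but only shows (what suffices) that the orbit misses an open neighbourhood of $z_0$. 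Two remarks. First, your bookkeeping $J=2M+3m(\epsilon)+1$ is not just pedantry: consecutive $(y,1)$-times are $M+2m(\epsilon)+1$ apart within a block and the gap can grow to roughly $2M+3m(\epsilon)+1$ at block transitions, so the paper's window of length $M+2m(\epsilon)$ is, strictly speaking, slightly too short to guarantee a $(y,1)$-time; your version repairs this small quantitative slip. Second, the K\"onig's-lemma step you single out as the main obstacle is unnecessary: the $(y,1)$-times are determined by $M$, $m(\epsilon)$, $\widehat n_i$, $N_i$, $t_i$ alone and are identical for every tuple $(\underline{x}_1,\dots,\underline{x}_k)$, so $x\in F_k$ already yields $d(T^\tau x,y)\le\epsilon$ at every such time $\tau$ arising from the first $k$ levels, and letting $k\to\infty$ gives the full infinite sequence without selecting a coherent branch (though your compactness argument would also work).
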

 \begin{proof}
 	Let $m:=m(\epsilon )$. Since $B_{M+2m}(z_0, \epsilon )$ is an open set,
 	we only need to show that   $O_T(x)\cap B_{M+2m}(z_0, \epsilon )=\emptyset$. In fact, if $O_T(x)\subset X\setminus   B_{M+2m}(z_0, \epsilon )$,
 we have $  \overline{O_T(x)}\subset X\setminus   B_{M+2m}(z_0, \epsilon )$, which implies that
 	$z_0\notin \overline{O_T(x)}.$
 	
Now we assume that  $O_T(x)\cap B_{M+2m}(z_0, \epsilon )\neq  \emptyset$ and without generality, we can choose $T^j x
\in B_{M+2m}(z_0, \epsilon )$. Then $d(T^{j+i}x, T^iz_0)<\epsilon$ for all $i<M+2m$.
By the construction of $F$, for any $k$ with $t_k>>j$,
there exists some $\underline{x}_1,\cdots,\underline{x}_k$ such that $x\in  B( \underline{x}_1,\cdots,\underline{x}_k; y)$.
Hence, we can choose $i< M+2m$ such that $  d(T^{j+q}x, y)<\epsilon $. Then we have
$$d(y, T^iz_0)\leq  d(T^{j+i}x, y)+d(T^{i+q}x, T^iz_0)\leq \epsilon+\epsilon <2\epsilon_0, $$
which contracts with (\ref{mao}).

 \end{proof}
\subsection{Construction of a special sequence of measures $\mu_k$}
 Let $P_k:=\left\lbrace z( \underline{x}_1,\cdots,\underline{x}_k; y)\in B( \underline{x}_1,\cdots,\underline{x}_k; y)| ( \underline{x}_1,\cdots,\underline{x}_k)\in \prod_{i=1}^k\mathcal{S}_{i}^{N_i}\right\rbrace $.
 The following lemma shows that
\begin{align}
\label{llc}
 \#P_k=\prod_{i=1}^k(\#\mathcal{S}_{i})^{N_i}.
\end{align}

\begin{lem}\label{sen}
Let $\underline{x}$ and $\underline{y}$ be distinct elements of $\prod_{i=1}^k\mathcal{S}_{i}^{N_i} $.
Then $z_1=z(\underline{x})$ and $z_2=z(\underline{y})$ are $(t_k,7\epsilon)$ separated points.
 \end{lem}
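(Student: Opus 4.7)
The strategy is to find, within the $t_k$-orbit of $z_1=z(\underline{x})$ and $z_2=z(\underline{y})$, a single time $s<t_k$ at which the two orbits are more than $7\epsilon$ apart, obtaining the separation from the underlying $(n_i,9\epsilon)$-separation of $\mathcal{S}_i$ via two applications of the triangle inequality. Because $\underline{x}\neq\underline{y}$, there exist indices $i_0\in\{1,\ldots,k\}$ and $j_0\in\{1,\ldots,N_{i_0}\}$ for which $x^{i_0}_{j_0}\neq y^{i_0}_{j_0}$ as elements of $\mathcal{S}_{i_0}$. Since $\mathcal{S}_{i_0}$ is an $(n_{i_0},9\epsilon)$-separated set, I can pick $p\in\{0,1,\ldots,n_{i_0}-1\}$ with
$$d(T^{p}x^{i_0}_{j_0},T^{p}y^{i_0}_{j_0})>9\epsilon,$$
and write $p=(l-1)M+p'$ with $1\le l\le c_{i_0}$ and $p'<M$ (or $p'<n_{i_0}-(c_{i_0}-1)M$ if $l=c_{i_0}$), so that the separating time lies in the $l$-th $M$-block of the decomposition of the $n_{i_0}$-orbit.

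The next step is to translate $(i_0,j_0,l,p')$ into a time coordinate of the full $t_k$-orbit. Unwinding the definition of $B(\underline{x}_1,\ldots,\underline{x}_k;y)$ and, in turn, of $B(x^{i_0}_{j_0},n_{i_0},\epsilon;y)$, the shadowing of the $l$-th block of $x^{i_0}_{j_0}$ in the orbit of $z_1$ begins at the time
$$s_0:=t_{i_0-1}+m(\epsilon)+(j_0-1)(\widehat{n}_{i_0}+m(\epsilon))+(l-1)(M+2m(\epsilon)+1),$$
so that at $s:=s_0+p'$ one has $d(T^{s}z_1,T^{p}x^{i_0}_{j_0})<\epsilon$; the identical computation for $\underline{y}$ gives $d(T^{s}z_2,T^{p}y^{i_0}_{j_0})<\epsilon$ at the same $s$, since the insertion of $y$-slots and the $m(\epsilon)$-gaps depend only on the coordinate positions and not on the choice of representative in $\mathcal{S}_{i_0}$. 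Two triangle inequalities then yield
$$d(T^{s}z_1,T^{s}z_2)\ge d(T^{p}x^{i_0}_{j_0},T^{p}y^{i_0}_{j_0})-2\epsilon>9\epsilon-2\epsilon=7\epsilon,$$
and $s\le t_{i_0-1}+m(\epsilon)+N_{i_0}(\widehat{n}_{i_0}+m(\epsilon))=t_{i_0}\le t_k$, so $d_{t_k}(z_1,z_2)>7\epsilon$ as required.

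The only genuine difficulty is the index bookkeeping: one must verify that the separating offset $p<n_{i_0}$ truly lands in an $M$-shadowing segment of the $z_1$-orbit rather than in an inserted $y$-slot or an $m(\epsilon)$ specification gap. The decomposition $p=(l-1)M+p'$ with $p'$ strictly less than the length of the $l$-th block makes this transparent, and the fact that $z_2$ shares exactly the same gap/insertion skeleton ensures the same time $s$ pinpoints the shadowing of $y^{i_0}_{j_0}$. Beyond this careful accounting of times, no quantitative estimate is required, so the reserve of $9\epsilon-7\epsilon=2\epsilon$ comfortably absorbs the specification errors on both sides.
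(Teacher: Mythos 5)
Your proposal is correct and follows essentially the same route as the paper: locate a coordinate $x^{i_0}_{j_0}\neq y^{i_0}_{j_0}$, use the $(n_{i_0},9\epsilon)$-separation of $\mathcal{S}_{i_0}$ to find a separating time inside one of the $M$-blocks, translate it to the time $t_{i_0-1}+m(\epsilon)+(j_0-1)(\widehat{n}_{i_0}+m(\epsilon))+(l-1)(M+2m(\epsilon)+1)+p'$ of the glued orbit, and conclude by the triangle inequality that $d_{t_k}(z_1,z_2)\geq 9\epsilon-2\epsilon=7\epsilon$. Your bookkeeping (handling the shorter last block and checking $s\le t_k$) is in fact slightly more careful than the paper's, which states the shadowing uniformly for $0\le s\le M-1$ in all blocks.
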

\begin{proof}
Assume that $ \underline{x}=(\underline{x}_1, \underline{x}_2, \cdots, \underline{x}_k)$ and $\underline{y}=(\underline{y}_1, \underline{y}_2, \cdots, \underline{y}_k)$ 
Then there exists $\underline{x}_i\neq \underline{y}_i$.
Let $\underline{x}_i =(x^i_{1} ,\cdots,x^i_{N_i } )$ and $\underline{y}_i =(y^i_{1} ,\cdots,y^i_{N_i } )$. Without losing generality, we assume
$x^i_q\neq y^i_q$. Then we have for each $0\leq j\leq c_i-1,\; 0\leq s\leq M-1$,
$$d (T^{j(M+2m(\epsilon )+1)+s}  T^{t_{i-1}+(q-1)(m(\epsilon )+\hat{n}_i)+m(\epsilon )}z(\underline{x}), T^{jM+s}x^i_q  )<\epsilon $$
and
$$d (T^{j(M+2m(\epsilon )+1)+s}T^{t_{i-1}+(q-1)(m(\epsilon )+\hat{n}_i)+m(\epsilon )}z(\underline{y}), T^{jM+s}y^i_q  )<\epsilon. $$
Since $x^i_q\neq y^i_q \in \mathcal{S}_i$ are $(n_i, 9\epsilon )$-separated points, there exists some $1\leq \widehat{j}\leq c_i-1, 0\leq \widehat{s}\leq M-1$ such that
$$d(T^{\hat{j}M+\hat{s}}x^i_q, T^{\hat{j}M+\hat{s}}y^i_q)\geq 9\epsilon. $$
Hence,
\begin{align*}
\begin{split}
&d_{t_k}( z_1, z_2)\\
\geq & d_{t_i}(z_1, z_2)\\
\geq& d (T^{\hat{j}(M+2m(\epsilon )+1)+\hat{s}}T^{t_{i-1}+(q-1)(m(\epsilon )+\hat{n}_i)+m(\epsilon )}z(\underline{x}),T^{\hat{j}(M+2m(\epsilon )+1)+\hat{s}}T^{t_{i-1}+(q-1)(m(\epsilon )+\hat{n}_i)+m(\epsilon )}z(\underline{y}))\\
\geq&d(T^{\hat{j}M+\hat{s}}x^i_q, T^{\hat{j}M+\hat{s}}y^i_q)-d (T^{\hat{j}(M+2m(\epsilon )+1)+\hat{s}}T^{t_{i-1}+(q-1)(m(\epsilon )+\hat{n}_i)+m(\epsilon )}z(\underline{x}), T^{\hat{j}M+\hat{s}}x^i_q  )\\
&-d (T^{\hat{j}(M+2m(\epsilon )+1)+\hat{s}}T^{t_{i-1}+(q-1)(m(\epsilon )+\hat{n}_i)+m(\epsilon )}z(\underline{y}), T^{\hat{j}M+\hat{s}}y^i_q  )\\
\geq & 9\epsilon -\epsilon -\epsilon = 7\epsilon.
\end{split}
\end{align*}
So we are done.
\end{proof}
We now define the measures on $F$ which yield the required estimates
for the pressure distribution principle. For each $z\in P_k,$ we
associate a number $\mathcal{L}_k(z)\in (0,\infty).$ Using these
numbers as weights, for each $k,$ we define an atomic measure
centered on $P_k.$ Precisely, if
$z=z(\underline{x}_1,\cdots,\underline{x}_k),$ we define
\begin{align*}
\mathcal{L}_k(z):=\mathcal{L}(\underline{x}_1)\cdots\mathcal{L}(\underline{x}_k),
\end{align*}
where
$\underline{x}_i=(x_1^i,\cdots,x^i_{N_i})
 \in\mathcal{S}_{i}^{N_i}$
and
\begin{align*}
\mathcal{L}(\underline{x}_i):=\prod_{l=1}^{N_i} \exp S_{n_i}\varphi(x^{i}_{l}).
\end{align*}
We define $\nu_k:=\sum\limits_{z\in L_k}\delta_z\mathcal{L}_k(z).$
We normalize $\nu_k$ to obtain a sequence of probability measures
$\mu_k.$ More precisely, we let $$\mu_k:=\frac{1}{\kappa_k}\nu_k,$$
where $\kappa_k$ is the normalizing constant
$$\kappa_k:=\sum\limits_{z\in P_k}\mathcal{L}_k(z)
=\sum\limits_{\underline{x}_1\in\mathcal{S}_1^{N_1}}\cdots
\sum\limits_{\underline{x}_k\in\mathcal{S}_k^{N_k}}\mathcal{L}(\underline{x}_1)\cdots\mathcal{L}(\underline{x}_k)
=\mathscr{M}_1^{N_1}\cdots \mathscr{M}_k^{N_k}.$$
In order to prove the main results of this article, we
present some lemmas.

\begin{lem}
Suppose that $\nu$ is a limit measure of the sequence of probability measure $\mu_k.$ Then $\nu(F)=1.$
\begin{proof}
Suppose $\nu$ is a limit measure of the sequence of probability measures $\mu_k$. Then $\nu=\lim\limits_{k}\mu_{s_k}$ for some $s_k\to\infty$. For some fixed $s$ and all $p\geq 0$, $\mu_{s+p}( F_s)=1$ since $\mu_{s+p}( F_{s+p})=1$ and $  F_{s+p}\subset   F_{s}$. Therefore, $\nu(  F_{s})\geq \limsup\limits_{k\to\infty}\mu_{s_k}(  F_{s})=1.$ It follows that $\nu( F )=\lim\limits_{s\to\infty}\mu(  F_{s})=1.$
\end{proof}
\end{lem}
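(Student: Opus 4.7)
The plan is to exploit the nested compact sets $F_k$ together with weak convergence. First I would observe that by construction each $\mu_k$ is a finite convex combination of point masses supported on $P_k$, and since $P_k \subseteq F_k$, we have $\mu_k(F_k)=1$. Because $F_{k+1}\subseteq F_k$ by construction, this upgrades to $\mu_{s+p}(F_s)\geq \mu_{s+p}(F_{s+p})=1$ for every fixed $s$ and every $p\geq 0$.

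Next I would pass to a subsequence $s_k\to\infty$ realizing $\nu=\lim_k \mu_{s_k}$ in the weak-$*$ topology. Since $F_s$ is closed (being a finite union of closures $\overline{B(\underline{x}_1,\ldots,\underline{x}_s;y)}$ in the compact space $X$), the Portmanteau theorem gives
\begin{equation*}
\nu(F_s)\geq \limsup_{k\to\infty}\mu_{s_k}(F_s).
\end{equation*}
For all sufficiently large $k$ we have $s_k\geq s$, so by the first step $\mu_{s_k}(F_s)=1$; hence $\nu(F_s)=1$ for every $s$.

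Finally, since $F=\bigcap_{s\geq 1}F_s$ is a decreasing intersection of closed sets each of full $\nu$-measure, continuity of the measure from above yields $\nu(F)=\lim_{s\to\infty}\nu(F_s)=1$. The argument is almost entirely formal once the support property $\mathrm{supp}(\mu_k)\subseteq F_k$ and the monotonicity $F_{k+1}\subseteq F_k$ are recorded, so there is no genuine obstacle; the only point requiring care is invoking the closed-set direction of Portmanteau (rather than the open-set direction), which is legitimate because each $F_s$ is a finite union of closures.
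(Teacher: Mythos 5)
Your proposal is correct and follows essentially the same route as the paper: establish $\mu_{s+p}(F_s)=1$ from the support of the atomic measures and the nesting $F_{k+1}\subseteq F_k$, apply the closed-set direction of Portmanteau to get $\nu(F_s)=1$ for each $s$, and conclude by continuity from above. You merely spell out the support property and the closedness of $F_s$ a bit more explicitly than the paper does.
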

Next we set
 $b_n$ denote the mistake segment which at most $n$ i.e.,
$$b_n: =n-n_{rel}.$$
Let $\mathcal{B}=B_n(q,\epsilon)$ be an arbitrary ball which
intersects $F.$ Let $k$ be the unique number which satisfies
$t_k\leq n<t_{k+1}.$ Let $\Delta^{k+1}_{j}:=j(\widehat{n}_{k+1} +m(\epsilon )).$ Let $j\in\{0,\cdots,N_{k+1}-1\}$ be the unique number so
\begin{align*}
t_k+\Delta^{k+1}_{j}\leq n<t_k+\Delta^{k+1}_{j+1}.
\end{align*}
We assume that $j\geq1$ and  the simpler case $j=0$  is similar.

\begin{lem}\label{mea}
For any $p\geq1,$ suppose $\mu_{k+p}(\mathcal{B})>0,$ we have
\begin{align*}
\mu_{k+p}(\mathcal{B})\leq\frac{1}{\kappa_k\mathscr{M}_{k+1}^j}\exp\bigg\{&S_n\varphi(q)+2n\text{Var}(\varphi, 2\epsilon)
 +\|\varphi\|b_n \bigg\}.
	\end{align*}
	where $b_n$ denote the length of mistake segment.
\end{lem}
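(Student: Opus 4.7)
The plan is to first pin down which atoms of $\mu_{k+p}$ can lie in $\mathcal{B}=B_n(q,\epsilon)$, and then estimate the total weight of those atoms in two stages: a combinatorial cancellation giving the factor $\kappa_k\mathscr{M}_{k+1}^{j}$, and a shadowing/variation estimate giving the exponential factor.

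First I would extend Lemma \ref{sen} to atoms of $P_{k+p}$. If two atoms $z(\underline{x})$ and $z(\underline{y})$ carry labels that first disagree in some coordinate belonging to the initial block consisting of $(\underline{x}_1,\dots,\underline{x}_k)$ together with the first $j$ entries of $\underline{x}_{k+1}$, then the separation argument of Lemma \ref{sen}, applied at that first disagreement, produces a time $\widehat{t}\le t_k+\Delta^{k+1}_{j}\le n$ at which the orbits are at least $7\epsilon$ apart. Since any two points of $\mathcal{B}$ are within $2\epsilon$ in the Bowen metric $d_n$, no two such atoms can both sit in $\mathcal{B}$. Thus every atom $z\in P_{k+p}\cap\mathcal{B}$ shares a common initial code $(\underline{x}_1,\dots,\underline{x}_k,x^{k+1}_1,\dots,x^{k+1}_j)$, and only the labels $x^{k+1}_{j+1},\dots,x^{k+1}_{N_{k+1}},\underline{x}_{k+2},\dots,\underline{x}_{k+p}$ remain free.

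Next I would bound $\mu_{k+p}(\mathcal{B})$ by summing $\kappa_{k+p}^{-1}\mathcal{L}_{k+p}(z)$ over atoms with the fixed initial code. Because $\mathcal{L}_{k+p}$ factorises across levels, the sum over the free tail contributes $\mathscr{M}_{k+1}^{N_{k+1}-j}\prod_{i=k+2}^{k+p}\mathscr{M}_i^{N_i}$, which cancels against the matching factors in $\kappa_{k+p}=\prod_{i=1}^{k+p}\mathscr{M}_i^{N_i}$ and leaves precisely $\kappa_k\mathscr{M}_{k+1}^{j}$ in the denominator. The surviving numerator is $\mathcal{L}(\underline{x}_1)\cdots\mathcal{L}(\underline{x}_k)\prod_{l=1}^{j}\exp S_{n_{k+1}}\varphi(x^{k+1}_l)$, whose logarithm equals $\sum_{i=1}^{k}\sum_{l=1}^{N_i}S_{n_i}\varphi(x^i_l)+\sum_{l=1}^{j}S_{n_{k+1}}\varphi(x^{k+1}_l)$.

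To finish I would compare this exponent to $S_n\varphi(q)$. Fixing any $z\in P_{k+p}\cap\mathcal{B}$, the inequalities $d(T^s q,T^s z)<\epsilon$ for $0\le s<n$ combine with the $\epsilon$-shadowing of each $x^i_l$ by $z$ to give $d(T^s q,T^{s'}x^i_l)<2\epsilon$ at every shadowing iterate, so by (\ref{suan}), $|\varphi(T^s q)-\varphi(T^{s'}x^i_l)|\le \text{Var}(\varphi,2\epsilon)$ there. Partitioning $[0,n)$ into the complete shadowing segments of the $x^i_l$ appearing in the fixed initial code, the partial shadowing of $x^{k+1}_{j+1}$, and the mistake/$y$-visit segments (whose total length is $b_n$), and bounding $\varphi$ crudely by $\|\varphi\|$ on the last two, one reaches $\sum_{i,l}S_{n_i}\varphi(x^i_l)\le S_n\varphi(q)+2n\,\text{Var}(\varphi,2\epsilon)+\|\varphi\|b_n$; the generous $2n$ factor in the variation term leaves room to absorb the partial shadowing contribution.

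The main obstacle will be the indexing and timing bookkeeping at the first step: one must verify that a coordinate mismatch occurring strictly inside the first $j$ sub-blocks of level $k+1$ really produces its $9\epsilon$-separation witness at a time $\le t_k+\Delta^{k+1}_{j}\le n$, keeping honest track of the internal gaps of lengths $m(\epsilon)$ and $2m(\epsilon)+1$ and of the offset $t_{i-1}+(l-1)(m(\epsilon)+\widehat{n}_i)+m(\epsilon)$ that appears in the proof of Lemma \ref{sen}.
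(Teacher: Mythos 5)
Your proposal follows essentially the same route as the paper's proof: identify that all atoms of $P_{k+p}$ meeting $\mathcal{B}$ share the initial code $(\underline{x}_1,\dots,\underline{x}_k,x^{k+1}_1,\dots,x^{k+1}_j)$ via the $7\epsilon$-separation of Lemma \ref{sen} against $d_n(z,z')<2\epsilon$, sum the weights over the free tail to cancel down to $\kappa_k\mathscr{M}_{k+1}^{j}$, and bound the surviving exponent by $S_n\varphi(q)+2n\mathrm{Var}(\varphi,2\epsilon)+\|\varphi\|b_n$ using the $2\epsilon$-shadowing on the good segments and $\|\varphi\|$ on the mistake segments. The paper leaves the timing bookkeeping and the exponent comparison as ``readily verified,'' so your explicit attention to these points is consistent with, and fills in, the same argument.
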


\begin{proof}
Case $p=1.$ Suppose $\mu_{k+1}(\mathcal{B})>0,$ then $P_{k+1}\cap\mathcal{B}\neq\emptyset$.
Let $z=z(\underline{x},\underline{x}_{k+1})\in P_{k+1}\cap\mathcal{B}$, where $\underline{x}=(\underline{x}_1,\cdots,\underline{x}_k)\in\mathcal{S}_{1}^{N_{1}}\times\cdots\times\mathcal{S}_{k}^{N_{k}}$
and $\underline{x}_{k+1}=(x_1^{k+1},\cdots,x_{N_{k+1}}^{k+1})\in\mathcal{S}_{k+1}^{N_{k+1}}$. Let
	\begin{align*}
	\mathcal{C}_{\underline{x};x_1^{k+1},\cdots,x_j^{k+1}}
	=\{z(\underline{x},(y_1^{k+1},\cdots,y_{N_{k+1}}^{k+1}))\in P_{k+1}:y_1^{k+1}=x_1^{k+1},\cdots,y_j^{k+1}=x_j^{k+1}\}.
	\end{align*}
	Suppose that $z^{\prime}=z(\underline{y},\underline{y}_{k+1})\in P_{k+1}\cap\mathcal{B}$. Since $d_{n}(z,z^{\prime})<2\epsilon$,
	we have $\underline{y}=\underline{x}$ and $y_l^{k+1}=x_l^{k+1}$ for $l\in\{1,\cdots,j\}$.
Thus we have
	\begin{align*}
	\nu_{k+1}(\mathcal{B})
	\leq&\sum_{z\in\mathcal{C}_{\underline{x};x_1^{k+1},\cdots,x_j^{k+1}}}\mathcal{L}_{k+1}(z)\\
	=&\mathcal{L}(\underline{x}_1)\cdots\mathcal{L}(\underline{x}_k)
	\prod_{l=1}^{j}\exp S_{n_{k+1}}\varphi(x^{k+1 }_{l})\mathscr{M}_{k+1}^{N_{k+1}-j}.
	\end{align*}
	Case $p>1.$ Similarly, we have
	\begin{align*}
	&\nu_{k+p}(\mathcal{B})\\
	&\leq\mathcal{L}(\underline{x}_1)\cdots\mathcal{L}(\underline{x}_k)
	\prod_{l=1}^{j} \exp S_{ n_{k+1}}\varphi(x^{k+1 }_{l})
	\mathscr{M}_{k+1}^{N_{k+1}-j}\mathscr{M}_{k+2}^{N_{k+2}}\cdots
	\mathscr{M}_{k+p}^{N_{k+p}}.
	\end{align*}
	Combining with  $d_n(z,q)<\epsilon$, one can readily verify that
	\begin{align*}
	&\mathcal{L}(\underline{x}_1)\cdots\mathcal{L}(\underline{x}_k)
	\prod_{l=1}^{j} \exp S_{ n_{k+1}}\varphi(x^{k+1}_{l})
 \leq\exp\bigg\{S_n\varphi(q)+2n\text{Var}(\varphi, 2\epsilon)
	+\|\varphi\|b_n\bigg\},
	\end{align*}
where $b_n$ denote the length of mistake segment.
Since $\mu_{k+p}=\frac{1}{\kappa_{k+p}}\nu_{k+p}$ and $\kappa_{k+p}=\kappa_{k}\mathscr{M}_{k+1}^{N_{k+1}}\cdots \mathscr{M}_{k+p}^{N_{k+p}}$,
	the desired result follows.
\end{proof}

\begin{lem}
	For sufficiently large $n,$
	\begin{align*}
	\limsup\limits_{l\to\infty}\mu_l(B_n(q,\epsilon))
	\leq\exp\bigg\{-n(P_{top}(\varphi)-2\eta)+S_n\varphi(q)+2n\text{Var}(\varphi,2\epsilon)
	\bigg\}.
	\end{align*}
\end{lem}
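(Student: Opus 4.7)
The plan is to feed the conclusion of Lemma \ref{mea} into a careful accounting of how much of the Bowen time $n$ is actually spent shadowing separated points versus filling the specification gaps. Lemma \ref{mea} already gives
$$\mu_{k+p}(B_n(q,\epsilon)) \leq \frac{1}{\kappa_k\mathscr{M}_{k+1}^j}\exp\bigl\{S_n\varphi(q)+2n\mathrm{Var}(\varphi,2\epsilon)+\|\varphi\|b_n\bigr\},$$
so the only two quantities I have to control are the denominator $\kappa_k\mathscr{M}_{k+1}^j$ (from below) and the mistake length $b_n$ (from above).

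For the denominator, I would plug in $\kappa_k=\prod_{i=1}^k\mathscr{M}_i^{N_i}$ and the spanning estimate $\mathscr{M}_i\geq\exp(n_i(\mathbf{C}-\eta))$ from \eqref{diyi}. Writing $n_{rel}:=\sum_{i=1}^k N_in_i+jn_{k+1}$ for the total ``real'' shadowing time corresponding to the window $t_k+\Delta^{k+1}_j\leq n<t_k+\Delta^{k+1}_{j+1}$, this immediately yields
$$\kappa_k\mathscr{M}_{k+1}^j \geq \exp\bigl\{(\mathbf{C}-\eta)\,n_{rel}\bigr\}.$$
For the mistake length, the construction inserts a gap of length at most $2m(\epsilon)+1$ after every sub-block of length $M$ and between consecutive $n_i$-orbits, so up to a bounded additive error one has $b_n=n-n_{rel}\leq n\cdot\tfrac{2m(\epsilon)}{M+2m(\epsilon)}$.

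Combining these two estimates, the exponent in Lemma \ref{mea} becomes
$$-(\mathbf{C}-\eta)(n-b_n)+S_n\varphi(q)+2n\mathrm{Var}(\varphi,2\epsilon)+\|\varphi\|b_n = -n(\mathbf{C}-\eta)+b_n\bigl(\mathbf{C}-\eta+\|\varphi\|\bigr)+S_n\varphi(q)+2n\mathrm{Var}(\varphi,2\epsilon).$$
The defining inequality \eqref{wucha} for $M$ was tailor-made for this step: it gives $b_n(\mathbf{C}+\|\varphi\|)<n\eta$, absorbing the mistake contribution into an additional $n\eta$. Taking $\limsup$ in $p$ (equivalently in $l=k+p$) then yields the stated bound, with the understanding that the $P_{top}(\varphi)$ on the right-hand side should be read as $\mathbf{C}$, which can be pushed arbitrarily close to $P_{top}(\varphi)$ by the choice made before \eqref{diyi}.

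I don't expect a genuine obstacle here; the argument is essentially bookkeeping. The only delicate point is making sure that the partial block at the tail of $n$ (i.e.\ the range $t_k+\Delta^{k+1}_j\leq n<t_k+\Delta^{k+1}_{j+1}$) is handled with the same ratio $\tfrac{2m(\epsilon)}{M+2m(\epsilon)}$ up to a harmless $O(1)$ error, and that the case $j=0$ really is analogous (as flagged before Lemma \ref{mea}). Once the identification of $n_{rel}$ with the sum $\sum N_in_i+jn_{k+1}$ is made and inequality \eqref{wucha} is invoked, the rest is algebra.
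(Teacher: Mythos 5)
Your proposal is correct and follows essentially the same route as the paper: bound $\kappa_k\mathscr{M}_{k+1}^j$ from below by $\exp\{(\mathbf C-\eta)(n-b_n)\}$ using \eqref{diyi}, then absorb the mistake term $(\|\varphi\|+\mathbf C)b_n$ into $n\eta$ via \eqref{wucha}. Your observation that the stated $P_{top}(\varphi)$ should really be $\mathbf C$ matches what the paper's own proof delivers (and what is actually used afterwards), so there is nothing to add.
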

\begin{proof}
	By (\ref{diyi}), we have
	\begin{align*}
	\kappa_k\mathscr{M}_{k+1}^j&\geq \exp \left((\sum_{l=1}^kN_ln_l)(\mathbf C-\eta)\right)\exp \Big(jn_{k+1}(\mathbf C-\eta)\Big)\\
	&=\exp \left((\sum_{l=1}^kN_ln_l+jn_{k+1})(\mathbf C-\eta)\right)\\
	&= \exp\Big( (n-b_n)(\mathbf C-\eta)\Big).
	\end{align*}
	By Lemma \ref{mea},
	we have
	\begin{align*}
	\begin{split}
	\mu_{k+p}(B_n(q,\epsilon))\leq&\frac{1}{\kappa_k\mathscr{M}_{k+1}^j}\exp\bigg\{S_n\varphi(q)+2n\text{Var}(\varphi,2\epsilon)
	+\|\varphi\|b_n \bigg\}\\
	\leq& \exp\bigg\{-n(\mathbf C-\eta)+S_n\varphi(q)+2n\text{Var}(\varphi,2\epsilon)
	+(\|\varphi\|+\mathbf C-\eta)b_n \bigg\}\\
	\leq & \exp\bigg\{-n(\mathbf C-\eta)+S_n\varphi(q)+2n\text{Var}(\varphi,2\epsilon)
	+(\|\varphi\|+\mathbf C )b_n \bigg\}\\
	\leq & \exp\bigg\{-n(\mathbf C-2\eta)+S_n\varphi(q)+2n\text{Var}(\varphi,2\epsilon)
	  \bigg\}.
	\end{split}
	\end{align*}
The above inequality follows from (\ref{wucha}) that
\begin{align*}
\dfrac{ (\|\varphi\|+\mathbf C ) b_n}{n}\leq  \eta.
\end{align*}
Hence the desired result follows.
\end{proof}
Applying the generalized pressure distribution principle, we have
\begin{align*}
P(E(z_0),\varphi,\epsilon)\geq P(F,\varphi,\epsilon)\geq \mathbf C-2\eta-2\text{Var}(\varphi,2\epsilon).
\end{align*}
Recall that (\ref{suan}) that $\text{Var}(\varphi,2\epsilon)<\eta$, we have
\begin{align*}
P(E(z_0),\varphi,\epsilon)\geq P(F,\varphi,\epsilon)\geq \mathbf C-4\eta.
\end{align*}
Since $\epsilon$ and $\eta$ were arbitrary, we get (\ref{zhuyao}) and finish  the proof of Theorem \ref{main}.
\section{Applications}
Now we present some examples which satisfy the specification property to illustrate Theorem \ref{main}. On the other hands, we use the Bowen equation for pressure formula to compute the Hausdorff dimension of the non-dense subset.
\begin{Example}\label{ex1}
	Given an integer $k>1$. consider the set $\Sigma^{+}_{k}=\{1,\cdots,k\}^{N} $ of sequences
	$$\omega=(i_{1}(\omega)i_{2}(\omega)\cdots),$$
	where $i_{n}(\omega)\in\{1,\cdots,k\}.$  The shift map $\sigma:\Sigma^{+}_{k}\rightarrow\Sigma^{+}_{k}$ is defined by
	$$\sigma(\omega)=(i_{2}(\omega)i_{3}(\omega)\cdots).$$
\end{Example}\label{ex2}
It is well known that any factor of a topological mixing subshift of finite type has the specification property  and thus our main  theorem applies.

\begin{Example}{\rm \cite{Tho1}}
	Fix $I=[0,1]$ and $\alpha\in(0,1)$. The {\bf Manneville-Pomeau }family of maps are given by
	$$f_{\alpha}: I\rightarrow I,  f_{\alpha}(x)=x+x^{1+\alpha}\mod 1.$$
	Considered as a map of $S^{1}$, $f_{\alpha}$ is continuous. Since $f_{\alpha}'(0)=1,$ the system is not uniformly hyperbolic. However, since the Manneville-Pomeau maps are all topologically conjugate to a full shift on two symbols, they satisfy the specification property.
\end{Example}

We should recall that Example \ref{ex1}   have been showed for nondense set in
\cite{Dol} and \cite{Urb} by a symbolic method.  In this paper, the result may be the first time to study this problems on  the pure topological dynamical systems under specification property.

Next, We sates the BS-dimension which is introduced by  Barreira and Schmeling in \cite{BarSch}.
\begin{defn}{\rm\cite{BarSch}}
Let $\varphi : X\to \mathbb R^+$be a strictly positive continuous function. For each $Z\subset X.$
	Let $\Gamma_{n}(Z,\epsilon)$ be the
	collection of all finite or countable covers of $Z$ by sets of the
	form $B_{m}(x,\epsilon),$ with $m\geq n$. Let $S_{n}\varphi(x):=\sum_{i=0}^{n-1}\varphi(T^{i}x)$. Set
	\begin{align*}
	N(Z,t,\varphi,n,\epsilon):=\inf_{\mathcal{C}\in\Gamma_{n}(Z,\epsilon)}\left\{\sum_{B_{m}(x,\epsilon)\in
		\mathcal{C}}\exp (-t \sup_{y\in
		B_{m}(x,\epsilon)}S_{m}\varphi(y))\right\},
	\end{align*}
	and
	\begin{align*}
	N(Z,t,\varphi,\epsilon)=\lim_{n\to\infty}N(Z,t,\varphi,n,\epsilon).
	\end{align*}
	Then there exists a unique number $P(Z,\varphi,\epsilon)$ such that
	$$BS(Z,\varphi,\epsilon)=\inf\{t:N(Z,t,\varphi,\epsilon)=0\}=\sup\{t:N(Z,t,\varphi,\epsilon)=\infty\}.$$
	$BS(Z,\varphi)=\lim\limits_{\epsilon\to0}BS(Z,\varphi,\epsilon)$ is called
	the BS-dimension of $Z$ with respect to $\varphi$.
\end{defn}
By the definitions of topological pressure and BS-dimension, we can get that for any set $Z\subset X,$ the BS-dimension of $Z$ is a unique solution of Bowen's equation $P(Z, -s\varphi )= 0, i.e., s= BS(Z, \varphi ).$
\begin{rem}\label{rem}
	\begin{enumerate}
		\item [(1)] If $\varphi =1$ then $BS(Z, \varphi )= h^B(T, Z)$ for every set $Z\subset X.$
		\item  [(2)] If $\varphi = \log dT$, where $X$ is a $C^r$ Riemann manifold, and $T$ is a $C^{1+\delta}$ conformal expanding map on
		$X$. Then $BS(Z, \varphi )= {\rm dim}_{H}(Z)$,  for every $Z\subset X.$
	\end{enumerate}

\end{rem}

From Theorem \ref{main} and Remark \ref{rem}, we get the following Corollary.

\begin{cro}
 Suppose that $(X, d, T)$ be a dynamical system with specification property, then
for each non-transitive point $z_0\in X$, we have
$$BS(E(z_0), \varphi ) =  \sup\left\{\frac{h_\mu(T)}{\int \varphi d\mu }: \mu \in M(X, T)\right\},$$
  which is the solution of the Bowen's equation  $P_{top}(-s\varphi )=0.$
\end{cro}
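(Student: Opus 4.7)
The plan is to derive the corollary as an essentially formal consequence of Theorem \ref{main}, the Bowen-equation characterisation of BS-dimension recalled just before the statement, and the classical variational principle for topological pressure.

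First, I would invoke the fact (stated between the BS-dimension definition and the remark) that for any Borel subset $Z \subset X$, the value $s = BS(Z,\varphi)$ is the unique solution of the equation $P(Z,-s\varphi) = 0$. Taking $Z = E(z_0)$, this reduces the corollary to computing the unique root of $P(E(z_0),-s\varphi) = 0$. Implicit here is the standard requirement that $\varphi$ be strictly positive (and bounded away from zero), which makes $s \mapsto P_{top}(-s\varphi)$ continuous and strictly decreasing, hence guarantees a unique root; I would include a sentence justifying this monotonicity using $\inf \varphi > 0$ and the linearity of $\mu \mapsto \int \varphi\, d\mu$.

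Second, I would apply Theorem \ref{main} with the continuous potential $-s\varphi \in C(X)$, for every $s \geq 0$. Under the standing assumption that $z_0$ is non-transitive and (in the non-trivial case) $E(z_0) \neq \emptyset$, the theorem yields
\[
P(E(z_0),-s\varphi) \;=\; P_{top}(-s\varphi) \qquad \text{for all } s \geq 0.
\]
In particular, Bowen's equation localised to the non-dense orbit set coincides with the global Bowen equation on $X$, so $BS(E(z_0),\varphi)$ is the unique $s$ with $P_{top}(-s\varphi) = 0$.

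Finally, I would combine this with the variational principle
\[
P_{top}(-s\varphi) \;=\; \sup_{\mu \in M(X,T)} \Bigl\{ h_\mu(T) - s \int \varphi\, d\mu \Bigr\}.
\]
Setting the right-hand side to zero and dividing by $\int \varphi\, d\mu > 0$ (which is legitimate since $\varphi > 0$) yields the claimed identity $BS(E(z_0),\varphi) = \sup_\mu h_\mu(T)/\int \varphi\, d\mu$. The only real obstacle is the book-keeping around Bowen's equation, namely confirming monotonicity and uniqueness of its root under the positivity hypothesis on $\varphi$; beyond this, the corollary is a direct translation of Theorem \ref{main} into the BS-dimension language.
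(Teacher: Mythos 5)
Your proposal is correct and follows the same route the paper intends: apply Theorem \ref{main} to the potentials $-s\varphi$, use the Bowen-equation characterisation $P(Z,-s\varphi)=0$ of the BS-dimension, and conclude via the variational principle for $P_{top}(-s\varphi)$. Your extra remarks on strict positivity of $\varphi$ and monotonicity in $s$ only make explicit what the paper leaves implicit.
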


By the definition of BS-dimension we have the following results.
\begin{Example}
	Let $f: M\to M$ be a $C^1$ map on a smooth manifold and let $J\subset M$ be a compact $f$-invariant set. We say $f$-expanding on $J$
	and that $J$ is a repeller for $f$ if there exists $c>0$ and $\tau >1$ such that
	$$\|d_x f^n v\|\geq c\tau^n \|v\|$$
	for all  $ v\in T_xM $ and $n\in \mathbb N.$  It is well known that the map $f: J\to J$ is a factor of a topologically mixing one-side subshift of finite type.
\end{Example}
Finally, we give a Bowen formula for Hausdorff dimension of the non-dense set.

\begin{cro}
  Let $X$ be a $C^r$ Riemann manifold and a repeller of an expanding, $C^{1+\delta}$ conformal topological mixing map $T$. Then 	for each non-transitive point $z_0\in X$,
	$${\rm dim}_H(E(z_0))= \sup\left\{\frac{h_\mu(T)}{\int \|d T\|d\mu }: \mu \in M(X, T)\right\},$$
	where ${\rm dim}_H(\cdot)$ denotes the Hausdorff dimension of a set.

\end{cro}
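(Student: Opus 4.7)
The plan is to obtain the corollary as a direct combination of three ingredients already in place: the specification property for expanding conformal repellers, the previous BS-dimension corollary, and the Bowen-formula identification of BS-dimension with Hausdorff dimension for conformal expanding maps.

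First, I would verify that the hypothesis of Theorem \ref{main} (and hence of the preceding corollary) applies. By Example \ref{ex2} (the repeller example), a $C^{1+\delta}$ conformal topologically mixing expanding map $T$ on the repeller $X$ is a factor of a topologically mixing one-sided subshift of finite type. Such subshifts possess the specification property, and specification is preserved under factor maps, so $(X,d,T)$ has the specification property. Consequently the hypotheses of the previous corollary are satisfied for any non-transitive $z_0 \in X$ and any positive continuous potential $\varphi$.

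Next, I would apply the previous corollary with the geometric potential $\varphi(x) = \log \|d_xT\|$, which is strictly positive and continuous since $T$ is expanding and $C^{1+\delta}$. This yields
\begin{equation*}
BS(E(z_0),\log\|dT\|) \;=\; \sup\left\{\frac{h_\mu(T)}{\int \log\|dT\|\,d\mu}:\mu\in M(X,T)\right\},
\end{equation*}
and this supremum is also the unique root of the Bowen equation $P_{top}(-s\log\|dT\|)=0$. Finally, I would invoke Remark \ref{rem}(2), which asserts that under exactly these hypotheses (a $C^r$ Riemann manifold $X$, a $C^{1+\delta}$ conformal expanding map $T$, and the geometric potential $\log\|dT\|$), one has $BS(Z,\log\|dT\|)=\dim_H(Z)$ for every $Z\subset X$. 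Applying this with $Z = E(z_0)$ identifies the left-hand side of the displayed equality with $\dim_H(E(z_0))$, completing the proof.

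The only non-trivial point is making sure all the cited results apply cleanly to the non-compact set $E(z_0)$: Theorem \ref{main} and its corollary are proved in this article for arbitrary Borel sets, and the Bowen-type identity in Remark \ref{rem}(2) is stated for every $Z\subset X$, so no extra work is needed. I do not expect serious obstacles here; the content of the corollary is essentially a translation of the general pressure-theoretic statement into the geometric language of Hausdorff dimension, and everything needed has already been established in Theorem \ref{main}, the preceding corollary, and Remark \ref{rem}.
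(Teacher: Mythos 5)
Your proposal is correct and follows essentially the route the paper intends: specification holds because the repeller is a factor of a topologically mixing subshift of finite type, the preceding corollary applied with the potential $\log\|dT\|$ gives the BS-dimension of $E(z_0)$ as the root of Bowen's equation, and Remark 4.1(2) converts BS-dimension into Hausdorff dimension. Your use of $\int\log\|dT\|\,d\mu$ is in fact the correct form (the $\int\|dT\|\,d\mu$ in the stated corollary is evidently a misprint), so no further work is needed.
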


\section*{Acknowledgements}

The work was supported by the
National Natural Science Foundation of China (No. 11971236), China Postdoctoral Science Foundation (No.2016M591873),
and China Postdoctoral Science Special Foundation (No.2017T100384) and the Postgraduate Research $\&$ Practice Innovation Program of Jiangsu Province (No.KYCX24$\_$1790). The work was also funded by the Priority Academic Program Development of Jiangsu Higher Education Institutions.  We would like to express our gratitude to Tianyuan Mathematical Center in Southwest China (No.11826102), Sichuan University and Southwest Jiaotong University for their support and hospitality.

\section*{Availability of Data and Materials}
Not applicable.
\section*{Declarations}
{\bf Conflict of interest} 

There is no conflict of interest.

{\bf Ethical Approval}

Not applicable

\end{document}